\newtheorem{theorem}{Theorem}[section]
\newtheorem{lemma}{Lemma}[section]
\newtheorem{proposition}{Proposition}[section]
\newtheorem{corollary}{Corollary}[section]
\theoremstyle{definition}
\newtheorem{definition}{Definition}[section]
\newcommand{\R}{{\mathbf R}}
\newcommand{\N}{{\mathbf N}}
\newcommand{\E}{{\mathbf E\,}}
\newcommand{\beqa}{\begin{eqnarray}}
\newcommand{\beqan}{\begin{eqnarray*}}
\newcommand{\eeqa}{\end{eqnarray}}
\newcommand{\eeqan}{\end{eqnarray*}}
\def\beq#1\eeq{\begin{equation}#1\end{equation}}
\def\P{\mathbf P }
 \def\na{\,\, {\raise.4pt\hbox{$\shortmid$}}{\hskip-2.0pt\to}\, \, }
\def\={\overset{ \text{\rm def} }=}
\def\R{\mathbb R}
\def\4{\kern1pt}
\newcommand{\tc}{}
\newtheorem{remark}{Remark}
\begin{document}

\title[Kolmogorov's uniform limit theorem]
{\tc{An improved multivariate version of Kolmogorov's second uniform limit theorem}}

\author[F.~G\"otze]{Friedrich G\"otze}
\author[A.Yu.~Zaitsev]{Andrei Yu. Zaitsev}
\author[D.~Zaporozhets]{Dmitry Zaporozhets}

\email{goetze@math.uni-bielefeld.de}
\address{Fakult\"at f\"ur Mathematik,\newline\indent
Universit\"at Bielefeld, Postfach 100131,\newline\indent D-33501 Bielefeld,
Germany\bigskip}
\email{zaitsev@pdmi.ras.ru}
\address{St.~Petersburg Department of Steklov Mathematical Institute
\newline\indent
Fontanka 27, St.~Petersburg 191023, Russia\newline\indent
and St.Petersburg State University, 7/9 Universitetskaya nab., St. Petersburg,
199034 Russia}

\begin{abstract}
The aim of the present work is to show that the results obtained earlier
on the approximation of distributions of sums of independent summands by
%the
 \tc{infinitely divisible laws} may be transferred to the estimation of the closeness of distributions on convex polyhedra.
\end{abstract}

\keywords {sums of independent random variables, closeness of successive convolutions, convex polyhedra, approximation, inequalities}

\subjclass {Primary 60F05; secondary 60E15, 60G50}

\thanks{The authors were supported by SFB 1283.
The first and second authors were supported  by the SPbGU-DFG grant 6.65.37.2017
The second author was supported by grant RFBR 19-01-00356. The second and third authors were supported  by the Program of Fundamental Researches of Russian
Academy of Sciences "Fundamental Mathematics and its Applications", grant PRAS-18-01}

\maketitle

\section{Introduction}
Let us  fix some $n\in\N$ and consider  \emph{independent}  random variables
\begin{align*}
    \xi_1,\dots,\xi_n\in\R^1,
\end{align*}
which are not necessary identically distributed. The classical result of Kolmogorov~\cite{23} states that if $n$ is large, then under quite  general conditions the distribution of their sum is close in the \emph{L\'evy metric} to the class of all infinitely divisible distributions. To be more precise, let us first fix some notation.

For a random variable $\xi\in\R^1$ define its \emph{concentration function} as
\begin{align*}
    Q(\xi;\tau):=\sup_{x\in\R^1}\P[x\leq\xi\leq x+\tau].
\end{align*}
The L\'evy metric between two random variables $\xi,\xi'\in\R^1$ is defined as
\begin{align*}
    L(\xi,\xi'):=\inf\{\lambda>0: \P[\xi\leq x]&\leq \P[\xi'\leq x+\lambda]+\lambda,
    \\\P[\xi'\leq x]&\leq \P[\xi\leq x+\lambda]+\lambda\text{ for all }x\in\R^1\}.
\end{align*}

Denote by $\mathfrak{D}$ the class of all infinitely divisible random variables.

Kolmogorov~\cite{23} showed that there exists an \emph{absolute} constant $c$ such that for \emph{arbitrary} independent random variables $\xi_1,\dots,\xi_n$ and for all $\tau>0$,
\begin{align}\label{1635}
   \inf_{\eta\in \mathfrak{D}} L(\xi_1+\dots+\xi_n,\eta)\leq c\cdot\big(p^{1/5}+\tau^{1/2}(|\log\tau|^{1/4}+1)\big),
\end{align}
where
\begin{align}\label{1500}
    p:=\max\{p_1,\dots,p_n\}\quad\text{and}\quad p_i:=1-Q(\xi_i;\tau),\,\, i=1,\dots,n.
\end{align}
Let us note that the original result in~\cite{23} has a slightly different  form formulated for $\tau\in(0,1/2]$, see Remark~\ref{1228} below.

The restriction \eqref{1500} on the distributions of summands is a \tc{non-asymptotic} analogue of the classical limit constancy condition 
%in the scheme of series of
\tc{for the triangular scheme for independent random
variables}. The bound for the rate of approximations may be considered as a qualitative
improvement of the classical \tc{Khinchin} theorem %about
\tc{for}  the set of infinitely divisible
distributions %as the 
\tc{being  limit laws of the distributions of sums
	in a triangular scheme.} %involved in the scheme of series.%

The L\'evy distance metrizes the weak convergence of probability distributions on the real line.
Therefore, Kolmogorov's inequality \eqref{1635} proves Khinchin's theorem since weak convergence as $p\to0$ and $\tau\to0$ of distributions of sums $\xi_1+\dots+\xi_n$ to some distribution implies weak convergence  to the same %distribution
\tc{limit}  of distributions of some infinitely divisible variables.%The limit distribution 
\tc{This limit} is infinitely divisible as a limit of infinitely divisible distributions. However, Kolmogorov's inequality \eqref{1635} provides good infinitely divisible \tc{approximations} for fixed small $p$ and $\tau$ even if the distributions of sums %involved in the scheme of series 
in the triangular scheme
with $p\to0$ and $\tau\to0$ are not sequentially compact.

Kolmogorov's methods of proving~\eqref{1635} along with a combinatorial lemma of Sperner have been later used by Rogozin in~\cite{bR61, bR61b} to obtain the result which is now well known as %the classical 
Kolmogorov--Rogozin inequality: for some absolute constant $c$ and all $\tau>0$,
\begin{align*}
    Q(\xi_1+\dots+\xi_n;\tau)\leq\frac{c}{\sqrt{p_1+\dots+p_n}}.
\end{align*}

Thereafter, %the 
subsequent improvements of the bound in the right-hand side of~\eqref{1635} have been obtained in~\cite{k63, ip73}. Finally, the optimal %one 
\tc{bound} was derived in Zaitsev and Arak~\cite{49}:
\begin{align}\label{1920}
    \inf_{\eta\in \mathfrak{D}} L(\xi_1+\dots+\xi_n,\eta)\leq c\cdot\big(p+\tau(|\log\tau|+1)\big).
\end{align}
The authors also considered in~\cite{49} the L\'evy--Prokhorov metric
\begin{align*}
    \pi(\xi,\xi'):=\inf\{\lambda\geq0: \P[\xi\in B]&\leq \P[\xi'\in B^\lambda]+\lambda,
    \\\P[\xi'\in B]&\leq \P[\xi\in B^\lambda]+\lambda\text{ for all Borel sets } B\},
\end{align*}
where
\begin{align*}
    B^\lambda:=\{x\in\R^1:\inf_{y\in B}|x-y|<\lambda\},
\end{align*}
and obtained the bound
\begin{align}\label{2234}
    \inf_{\eta\in \mathfrak{D}} \pi(\xi_1+\dots+\xi_n,\eta)\leq c\cdot\big(p+\tau(|\log\tau|+1)\big)+\sum_{i=1}^np_1^2,
\end{align}
which is optimal as well. For a %deeper
\tc{more detailed}  discussion of the subject we refer the reader to~\cite{2}.

Estimates~\eqref{1920} and~\eqref{2234}, although being optimal,
have their drawbacks %,  however: 
\tc{inasmuch they are for instance} not invariant with
respect to the scaling of the random variables. In particular,
they become  %insignificant
\tc{trival} for large $\tau$. Thus in~\cite{z84} 
%it as been suggested
\tc{ the following generalization of~\eqref{1920}
and~\eqref{2234} hasb been suggested which lacks %such kind of
these disadvantages:} for some
absolute constants $c,\varepsilon>0$ and all $\lambda,\tau>0$,
\begin{equation}\label{1059}
     \begin{split}\inf_{\eta\in \mathfrak{D}} L_\lambda(\xi_1+\dots+\xi_n,\eta)&\leq c\cdot\big(p+\exp(-\varepsilon\cdot\lambda/\tau)\big),
     \\\inf_{\eta\in \mathfrak{D}} \pi_\lambda(\xi_1+\dots+\xi_n,\eta)&\leq c\cdot\big(p+\exp(-\varepsilon\cdot\lambda/\tau)\big)+\sum_{i=1}^np_1^2,
\end{split}\end{equation}
where $L_\lambda(\,\cdot\,,\,\cdot\,)$ and
$\pi_\lambda(\,\cdot\,,\,\cdot\,)$ are the following refinements
of the L\'evy and L\'evy--Prokhorov metrics:
\begin{align*}
     L_\lambda(\xi,\xi')&:=\sup_{x\in\R^1}\max\{\P[\xi\leq x]-\P[\xi'\leq x+\lambda], \P[\xi'\leq x]-\P[\xi\leq x+\lambda]\},
     \\ \pi_\lambda(\xi,\xi')&:=\sup_{\substack{\text{Borel}\\\text{sets } B}}\max\{\P[\xi\in B]-\P[\xi'\in B^\lambda], \P[\xi'\in B]-\P[\xi\in B^\lambda]\}.
\end{align*}
The latter quantity was first considered in~\cite{BS80}, while the
former one -- in~\cite{z84}. Knowing
$L_\lambda(\,\cdot\,,\,\cdot\,)$ and
$\pi_\lambda(\,\cdot\,,\,\cdot\,)$ provides more information on
the closeness of the distributions of random variables rather than
just knowing $L(\,\cdot\,,\,\cdot\,)$ and
$\pi(\,\cdot\,,\,\cdot\,)$. In particular, we have
\begin{equation}\label{1105}\begin{split}
     L(\xi,\xi')&:=\inf\{\lambda>0:L_\lambda(\xi,\xi')<\lambda\},
     \\\pi(\xi,\xi')&:=\inf\{\lambda>0:\pi_\lambda(\xi,\xi')<\lambda\}.
\end{split}\end{equation}
\begin{remark}\label{524}
We would like to stress that~\eqref{1059} is indeed the generalization of~\eqref{1920} and~\eqref{2234}: it is straightforward  to check that~\eqref{1059} together with~\eqref{1105} implies~\eqref{1920} and~\eqref{2234}, see \cite{z84} for details.
\end{remark}
\begin{remark}\label{1228}
In~\cite{23},  it was essentially proved  that for all $\tau, \lambda$ such that $0<2\tau\leq \lambda$,
\begin{align*}
     \inf_{\eta\in \mathfrak{D}} L_\lambda(\xi_1+\dots+\xi_n,\eta)\leq c\cdot\bigg( p^{1/5}+\frac\tau\lambda \log^{1/2}\frac \lambda\tau\bigg),
\end{align*}
which together with~\eqref{1105} implies~\eqref{1635} for $\tau\in(0,1/2]$.
\end{remark}

\section{\tc{Higher Dimensions}}
The problem discussed in the previous section can be naturally extended to higher dimensions. Now let
\begin{align*}
    \xi_1,\dots,\xi_n\in\R^d
\end{align*}
be independent $d$-dimensional random vectors. The concentration function of a random vector $\xi\in\R^d$ is defined as
\begin{align}\label{2256}
    Q(\xi;\tau):=\sup_{x\in\R^d}\P[|\xi-x|\leq\tau],
\end{align}
where for $d=1$ this definition differs from the previous one by the scaling factor 2 in the argument. The definitions of $\pi(\,\cdot\,,\,\cdot\,)$ and $\pi_\lambda(\,\cdot\,,\,\cdot\,)$ in $\R^d$ stay without any changes with $B^\lambda$ being defined as
\begin{align*}
    B^\lambda:=\{x\in\R^d:\inf_{y\in B}|x-y|<\lambda\}.
\end{align*}
The situation with $L(\,\cdot\,,\,\cdot\,)$ and $L_\lambda(\,\cdot\,,\,\cdot\,)$ is more ambiguous. In~\cite{z89}, the following multidimensional versions of these quantities have been suggested. Let
\begin{align*}
    \mathbf{1}_d:=(1,\dots,1)\in\R^d.
\end{align*}
For random vectors $\xi,\xi'\in\R^d$ define
\begin{align}\label{1957}
    L(\xi,\xi'):=\inf\{\lambda>0: \P[\xi\leq x]&\leq \P[\xi'\leq x+\lambda\mathbf{1}_d]+\lambda,
    \\\P[\xi'\leq x]&\leq \P[\xi\leq x+\lambda\mathbf{1}_d]+\lambda\text{ for all }x\in\R^d\}\notag
\end{align}
and
\begin{align}\label{2001}
     L_\lambda(\xi,\xi')&:=\sup_{x\in\R^d}\max\{\P[\xi\leq x]-\P[\xi'\leq x+\lambda\mathbf{1}_d], \P[\xi'\leq x]-\P[\xi\leq x+\lambda\mathbf{1}_d]\}.
\end{align}
Consider some arbitrary independent random vectors $\xi_1,\dots,\xi_n\in\R^d$ and let $p, p_1,\dots,p_n$ be defined as in~\eqref{1500}. It was shown in~\cite{z89} that the following multidimensional version of~\eqref{1059} holds: for some constants $c_d,\varepsilon_d>0$ depending on $d$ only and all $\tau,\lambda>0$ we have
\begin{align}\label{1514}
     \inf_{\eta\in \mathfrak{D}_d} L_\lambda(\xi_1+\dots+\xi_n,\eta)&\leq c_d\cdot\big(p+\exp(-\varepsilon_d\cdot\lambda/\tau)\big),
     \\\inf_{\eta\in \mathfrak{D}_d} \pi_\lambda(\xi_1+\dots+\xi_n,\eta)&\leq c_d\cdot\big(p+\exp(-\varepsilon_d\cdot\lambda/\tau)\big)+\sum_{i=1}^np_i^2,\label{2341}
\end{align}
where $\mathfrak{D}_d$ denotes the class of all $d$-dimensional infinitely divisible random vectors.

\begin{remark}\label{529}
Note that~\eqref{1105} remains true in higher dimensions, too. Thus it is not hard to show (see \cite{z84} for details) that~\eqref{1514} together with~\eqref{1105} implies  the  multidimensional analogues of~\eqref{1920} and~\eqref{2234}:
\begin{align}\label{2044}
    \inf_{\eta\in \mathfrak{D}_d} L(\xi_1+\dots+\xi_n,\eta)&\leq c_d\cdot\big(p+\tau(|\log\tau|+1)\big),
    \\\inf_{\eta\in \mathfrak{D}_d} \pi(\xi_1+\dots+\xi_n,\eta)&\leq c_d\cdot\big(p+\tau(|\log\tau|+1)\big)+\sum_{i=1}^np_i^2.\label{2342}
\end{align}
\end{remark}

\section{Main results}
The definition of the multidimensional L\'evy metric given in the previous section is not %quite 
\tc{really} natural: it heavily depends on the choice of the coordinate basis, while the concentration functions involved in the upper bounds in~\eqref{1514} do not. Our aim is to suggest a coordinate-free definition and to obtain a counterpart of~\eqref{1514} for it. Let us start with some notation.

For $m\in\N$ we denote by $\mathcal P_m$ the class of convex polyhedra $P \subset \R^d$
representable as
\begin{equation}\label{wq}
P=\big\{x\in\R^d: \langle x,t_j\rangle\le b_j, \ j=1,\ldots, m\big\},
\end{equation}
where $t_j\in \R^d$ with $|t_j|=1$ and $b_j\in\R^1$, $j=1,\ldots, m$. For $P\in\mathcal P_m$ defined in \eqref{wq} and $\lambda \ge0$ let
\begin{align*}
P_\lambda=\big\{x\in\R^d:\langle x,t_j\rangle\le b_j+\lambda, \ j=1,\ldots, m\big\}.
\end{align*}
By definition, $ P_\lambda$ is the intersection of closed $\lambda$-neighborhoods of half-spaces $\big\{x\in\R^d:\langle x,t_j\rangle\le b_j\big\}$, where $j=1,\ldots, m$. Let us stress that $P_\lambda$ depends on representation~\eqref{wq} which might  be not unique for a given polyhedron.

Let $e_1,\dots,e_d\in \R^d$ be the vectors of the standard Euclidean basis. If we consider  the subclass $\mathcal P_d^*\subset \mathcal P_d$ of those polyhedra  which are representable as
\begin{equation}
P=\big\{x\in\R^d:\langle x,e_j\rangle\le b_j, \ j=1,\ldots, d\big\}, \label{g9}
\end{equation}
then~\eqref{1957} and~\eqref{2001} are obviously equivalent to
\begin{align*}
    L(\xi,\xi'):=\inf\{\lambda\geq0: \P[\xi\in P]&\leq \P[\xi'\in P_\lambda]+\lambda,
    \\\P[\xi'\in P]&\leq \P[\xi\in P_\lambda]+\lambda\text{ for all } P\in \mathcal P_d^*\}
\end{align*}
and
\begin{align}\label{305}
    L_\lambda(\xi,\xi')&:=\sup_{P\in \mathcal P_d^*}\max\{\P[\xi\in P]-\P[\xi'\in P_\lambda], \P[\xi'\in P]-\P[\xi\in P_\lambda]\}.
\end{align}
This observation %leads to the idea
\tc{suggests the following} coordinate-free definition for $L(\,\cdot\,,\,\cdot\,)$ and $L_\lambda(\,\cdot\,,\,\cdot\,)$ in $\R^d$.
\begin{definition}
For $m\in\mathbf N$ and for random vectors $\xi,\xi'\in\R^d$ define
\begin{align*}
    L_m(\xi,\xi'):=\inf\{\lambda\geq0: \P[\xi\in P]&\leq \P[\xi'\in P_\lambda]+\lambda,
    \\\P[\xi'\in P]&\leq \P[\xi\in P_\lambda]+\lambda\text{ for all } P\in \mathcal P_m\}
\end{align*}
and
\begin{align*}
    L_{\lambda, m}(\xi,\xi')&:=\sup_{P\in \mathcal P_m}\max\{\P[\xi\in P]-\P[\xi'\in P_\lambda], \P[\xi'\in P]-\P[\xi\in P_\lambda]\}.
\end{align*}

\end{definition}
It follows directly from the definition that for $m\geq d$,
\begin{align}\label{2145}
    L(\,\cdot\,,\,\cdot\,)\leq L_m(\,\cdot\,,\,\cdot\,)\quad\text{and}\quad L_\lambda(\,\cdot\,,\,\cdot\,)\leq L_{\lambda,m}(\,\cdot\,,\,\cdot\,).
\end{align}
Our first result %gives the
\tc{provides}
 upper bounds for $L_m(\,\cdot\,,\,\cdot\,)$ and $L_{\lambda,m}(\,\cdot\,,\,\cdot\,)$ similar to~\eqref{1514} and~\eqref{2044}.

Consider again some arbitrary independent random vectors $\xi_1,\dots,\xi_n\in\R^d$ and let $p, p_1,\dots,p_n$ be defined as in~\eqref{1500}.
\begin{theorem}\label{th2}
For any $m\in\N$ there exist constants $c_m,\varepsilon_m$ depending on $m$ only such that
\begin{align*}
    \inf_{\eta\in \mathfrak{D}_d} L_m(\xi_1+\dots+\xi_n,\eta)&\leq c_m\cdot\big(p+\tau(|\log\tau|+1)\big),
    \\\inf_{\eta\in \mathfrak{D}_d} L_{\lambda,m}(\xi_1+\dots+\xi_n,\eta)&\leq c_m\cdot\big(p+\exp(-\varepsilon_m\cdot\lambda/\tau)\big),
\end{align*}for all $\tau, \lambda>0$.
\end{theorem}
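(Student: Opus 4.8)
The plan is to reduce the estimate for the coordinate-free metric $L_m$ to the already established coordinate-dependent bound~\eqref{1514}. The key observation is that a polyhedron $P\in\mathcal P_m$ defined by $m$ half-spaces with unit normals $t_1,\dots,t_m$ lives, up to an affine change of coordinates, inside $\R^d$ but its membership indicator depends only on the $m$ linear functionals $x\mapsto\langle x,t_j\rangle$. So I would first introduce the linear map $T\colon\R^d\to\R^m$ given by $Tx:=(\langle x,t_1\rangle,\dots,\langle x,t_m\rangle)$, and observe that $\{x\in P\}=\{Tx\in Q\}$ where $Q\in\mathcal P_m^*\subset\mathcal P_m$ is the \emph{coordinate} box $\{y\in\R^m:y_j\le b_j\}$ in $\R^m$; moreover $\{x\in P_\lambda\}=\{Tx\in Q_\lambda\}$ with the same $\lambda$, since the $\lambda$-relaxation acts identically on the defining inequalities. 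Since $|t_j|=1$, the map $T$ is $1$-Lipschitz in each coordinate (indeed $\|Tx-Ty\|_\infty\le|x-y|$), so it does not expand concentration functions: $Q(T\zeta;\tau)\ge Q(\zeta;\tau)$ is false in general, but what we actually need is the cheap direction $Q(T\zeta;\tau)\le 1$ trivially, together with the genuinely useful fact that the concentration function of the \emph{pushed-forward} summands satisfies $Q(T\xi_i;\tau)\ge Q(\xi_i;\tau)$ because $|Tx|\le\sqrt m\,|x|$ forces balls to map into balls; more carefully, $\{|\xi_i-x|\le\tau\}\subset\{|T\xi_i-Tx|\le\sqrt m\,\tau\}$, hence $p_i(T\xi_i;\sqrt m\,\tau)\le p_i(\xi_i;\tau)$, so passing to dimension $m$ only costs a factor $\sqrt m$ in $\tau$.

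Second, I would apply the multivariate result~\eqref{1514} in $\R^m$ to the independent random vectors $T\xi_1,\dots,T\xi_n$: there is an infinitely divisible $\tilde\eta\in\mathfrak D_m$ with
\begin{align*}
L_{\lambda,m}^{(\R^m)}\big(T\xi_1+\dots+T\xi_n,\ \tilde\eta\big)\le c_m\cdot\big(\tilde p+\exp(-\varepsilon_m\lambda/\tilde\tau)\big),
\end{align*}
where $\tilde\tau=\sqrt m\,\tau$ and $\tilde p=\max_i p_i(T\xi_i;\tilde\tau)\le p$, and here $L_{\lambda,m}^{(\R^m)}$ is the quantity in~\eqref{2001} taken in ambient dimension $m$ — which by the discussion around~\eqref{305} is exactly the supremum over coordinate boxes $Q\in\mathcal P_m^*$ in $\R^m$, i.e. over all the polyhedra $Q$ arising from our $P$'s. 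Since $T(T\xi_1+\dots+T\xi_n)=T\xi_1+\dots+T\xi_n$ is already the correct pushforward (linearity), and since the infinitely divisible law $\tilde\eta$ may be realized as $T$ applied to a suitable $d$-dimensional infinitely divisible vector only if $\tilde\eta$ is supported on $T(\R^d)$ — which it need not be — I would instead take $\eta$ to be \emph{any} $d$-dimensional infinitely divisible vector whose pushforward under $T$ is close to $\tilde\eta$; in fact it is cleaner to run the entire construction in $\R^d$ from the start using the proof scheme of~\cite{z89}, so that the accompanying infinitely divisible approximation is automatically $d$-dimensional, and only the \emph{test class} is enlarged from coordinate boxes to $\mathcal P_m$. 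Concretely: inspect the proof of~\eqref{1514} in~\cite{z89}; it produces $\eta\in\mathfrak D_d$ and controls $\P[S\in A]-\P[\eta\in A_\lambda]$ for $A$ ranging over coordinate half-spaces / boxes. The same argument, with the finitely many directions $t_1,\dots,t_m$ fixed in advance, controls it for $A$ ranging over all intersections of $\le m$ half-spaces with those normals — the combinatorial/smoothing ingredients (Sperner-type lemma, tilting, characteristic-function estimates) only ever see one half-space at a time, and intersecting $m$ of them costs at most a factor $m$ by a union bound on the "bad'' events.

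Thus the step-by-step plan is: (i) fix $m$, and for an \emph{arbitrary} $P\in\mathcal P_m$ record its normals $t_1,\dots,t_m$ and offsets $b_j$; (ii) reduce $\{x\in P\}$, $\{x\in P_\lambda\}$ to coordinate-type events for the functionals $\langle\,\cdot\,,t_j\rangle$; (iii) invoke~\eqref{1514} (or its proof) to get a single $\eta\in\mathfrak D_d$ and a bound of the form $c_m(p+\exp(-\varepsilon_m\lambda/\tau))$ uniformly over \emph{all} choices of the $b_j$ for these fixed normals, i.e. uniformly over the sub-family of $\mathcal P_m$ with normals $t_1,\dots,t_m$; (iv) since the bound does not depend on which normals were chosen, take the supremum over $P\in\mathcal P_m$ and conclude the second displayed inequality of Theorem~\ref{th2}; (v) derive the first inequality from the second together with~\eqref{1105} exactly as in Remark~\ref{529}, noting that the passage $L_{\lambda,m}\Rightarrow L_m$ is formally identical to the one-dimensional passage $L_\lambda\Rightarrow L$ and produces the $\tau(|\log\tau|+1)$ term by optimizing $\lambda\asymp\tau|\log\tau|$. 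The main obstacle is step (iii): one must verify that enlarging the test sets from single half-spaces (one per coordinate) to intersections of $m$ prescribed half-spaces genuinely only costs a multiplicative constant $m$ — equivalently, that the infinitely divisible approximation constructed in~\cite{z89} is "strong'' enough that $\P[S\in P]-\P[\eta\in P_\lambda]\le\sum_{j=1}^m\big(\P[\langle S,t_j\rangle\le b_j,\ \text{stuff}]-\dots\big)$ can be dominated termwise. I expect this to follow from the fact that $S-\eta$ is controlled in total variation on each one-dimensional projection after a mild smoothing, but pinning down the exact inequality and confirming that the $\lambda$-neighborhood $P_\lambda$ (the intersection of the $\lambda$-relaxed half-spaces, \emph{not} the metric $\lambda$-neighborhood) is the right object on the $\eta$ side is where the real work lies; everything else is bookkeeping inherited from the cited papers.
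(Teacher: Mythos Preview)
Your high-level strategy---push everything forward along the linear map $T\colon\R^d\to\R^m$, $Tx=(\langle x,t_1\rangle,\dots,\langle x,t_m\rangle)$, so that $P\in\mathcal P_m$ becomes a coordinate box $\widetilde P\in\mathcal P_m^*$ in $\R^m$ and $P_\lambda$ becomes $\widetilde P_\lambda$---is exactly the paper's. Steps (i), (ii), (iv), (v) are correct and match the paper verbatim, including the observation that $\|T\|\le\sqrt m$ forces only the replacement $\tau\mapsto\sqrt m\,\tau$.

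The gap is in step (iii), and you identify it yourself: an abstract $\tilde\eta\in\mathfrak D_m$ obtained from~\eqref{1514} need not be of the form $T\eta$ for any $\eta\in\mathfrak D_d$, and even if it were, it would depend on the normals $t_1,\dots,t_m$, so you could not take the supremum over $P\in\mathcal P_m$ afterwards. Your proposed fixes---``run the construction in $\R^d$ from the start'', ``union bound over the $m$ half-spaces'', ``inspect the internals of~\cite{z89}''---are vague and, in the union-bound form, actually wrong: differences of the type $\P[S\in P]-\P[\eta\in P_\lambda]$ do \emph{not} decompose as sums over the defining half-spaces, so a union bound over directions does not control the intersection.

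The paper's resolution is much simpler and avoids all of this. One does not invoke~\eqref{1514} abstractly; one uses the \emph{explicit} approximating vector furnished by Lemma~\ref{l1},
\[
\eta_0=\sum_{i=1}^n\bigl[a_i+e(\xi_i-a_i)\bigr]\in\mathfrak D_d,
\]
which depends only on the $\xi_i$'s and not on $P$. The key point you are missing is that the compound Poisson construction is \emph{natural} under linear maps: for any linear $A$ one has $Ae(\zeta)\stackrel{d}{=}e(A\zeta)$, hence
\[
A\eta_0=\sum_{i=1}^n\bigl[Aa_i+e(A\xi_i-Aa_i)\bigr],
\]
which is precisely the compound Poisson approximation of Lemma~\ref{l1} for the summands $A\xi_1,\dots,A\xi_n\in\R^m$ (with centers $Aa_i$, $Aa_i'$ and parameter $\sqrt m\,\tau$). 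Applying~\eqref{2319} in $\R^m$ therefore yields
\[
L_\lambda\bigl(A(\xi_1+\dots+\xi_n),\,A\eta_0\bigr)\le c_m\bigl(p+\exp(-\varepsilon_m\lambda/\tau)\bigr),
\]
and by your own step (ii) this bounds $\P[S\in P]-\P[\eta_0\in P_\lambda]$ uniformly in $P\in\mathcal P_m$, with the \emph{same} $\eta_0$ for every $P$. Taking the supremum over $P$ gives the second inequality of the theorem directly. No union bound, no re-reading of~\cite{z89}, is needed---just the commutation of $e(\cdot)$ with linear maps.
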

Let us %stress
\tc{emphasize}  that the constants $c_m, \varepsilon_m$ do not depend on dimension $d$.

Applying Theorem~\ref{th2} for $\tau=0$, and that
\begin{align*}
    \P[\xi\in P]=\lim_{\lambda\to0}\P[\xi\in P_\lambda],\quad\text{ for any } P\in \mathcal P_m,
\end{align*} we get the following Corollary~\ref{c1}.
\begin{corollary}[G\"otze and Zaitsev \cite{GZ17}, see also Zaitsev \cite{z92}]\label{c1}
%Let
\tc{Assume that the conditions of Theorem~$\ref{th2}$ are satisfied} with $\tau=0$. Then, for any $m\in\N$, there exist \tc{a} constant $c_m$ depending on $m$ only such that
$$\inf_{\eta\in \mathfrak{D}_d} \rho_m(\xi_1+\dots+\xi_n,\eta)\leq c_m p,$$ where
\begin{align*}
    \rho_{ m}(\xi,\xi')&:=\sup_{P\in \mathcal P_m}\left|\P[\xi\in P]-\P[\xi'\in P]\right|.
\end{align*}
\end{corollary}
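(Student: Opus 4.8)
The plan is to derive Corollary~\ref{c1} directly from Theorem~\ref{th2} by a limiting argument in the parameters. First I would apply the second inequality of Theorem~\ref{th2} with $\tau\to 0^+$: for fixed $\lambda>0$ the term $\exp(-\varepsilon_m\lambda/\tau)\to 0$, so one obtains, for every $\lambda>0$ and every choice of independent $\xi_1,\dots,\xi_n$ satisfying the hypotheses with $\tau=0$ (equivalently, the hypotheses hold in the limit, since $p_i=1-Q(\xi_i;0)$),
\begin{align*}
\inf_{\eta\in\mathfrak D_d} L_{\lambda,m}(\xi_1+\dots+\xi_n,\eta)\le c_m\,p.
\end{align*}
A small technical point: the infimum over $\eta$ and the limit in $\tau$ must be interchanged carefully, which is handled by noting that for each fixed $\tau$ one may pick a near-optimal $\eta_\tau$; since the right-hand side bound $c_m(p+\exp(-\varepsilon_m\lambda/\tau))$ is monotone in $\tau$, any subsequential weak limit point of the $\eta_\tau$ (which exists after a tightness check, or one simply passes to the $\liminf$) is again infinitely divisible by the usual closedness of $\mathfrak D_d$ under weak limits, and achieves the bound $c_m p$ for $L_{\lambda,m}$.

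Next I would remove the $\lambda$. Fix the distribution $S:=\xi_1+\dots+\xi_n$ and for each $\lambda>0$ let $\eta_\lambda\in\mathfrak D_d$ be a random vector with $L_{\lambda,m}(S,\eta_\lambda)\le c_m p + \lambda$ (say). The family $\{\mathcal L(\eta_\lambda):\lambda\in(0,1]\}$ is tight: from $\P[\eta_\lambda\in P_\lambda]\ge \P[S\in P]-c_m p-\lambda$ applied to a large ball $P$ (which lies in $\mathcal P_m$ once $m\ge 1$, or can be approximated using the defining half-spaces), one controls the tails of $\eta_\lambda$ uniformly in $\lambda\le 1$. Hence along a sequence $\lambda_k\downarrow 0$ we have $\eta_{\lambda_k}\Rightarrow\eta$ for some $\eta$, and $\eta\in\mathfrak D_d$ since the class of infinitely divisible laws on $\R^d$ is weakly closed. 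It remains to pass to the limit in the estimate: for a fixed $P\in\mathcal P_m$ with representation~\eqref{wq}, the sets $P_{\lambda_k}$ decrease to $P$ (here we use precisely the identity $\P[\xi\in P]=\lim_{\lambda\to 0}\P[\xi\in P_\lambda]$ quoted in the statement), and weak convergence gives $\limsup_k \P[\eta_{\lambda_k}\in P_{\lambda_k}]\le \P[\eta\in \overline{P}]=\P[\eta\in P]$, while $\liminf_k\P[\eta_{\lambda_k}\in P_{\lambda_k}]\ge \P[\eta\in \mathrm{int}\,P]$, so in the limit $|\P[S\in P]-\P[\eta\in P]|\le c_m p$. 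Taking the supremum over $P\in\mathcal P_m$ yields $\rho_m(S,\eta)\le c_m p$, which is the claim.

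The main obstacle is the double limiting argument and, in particular, justifying the interchange of the limit in $\lambda$ (and in $\tau$) with the infimum over the infinitely divisible approximant: one must produce a single $\eta$ that works for all $P\in\mathcal P_m$ simultaneously, rather than a $P$-dependent one, and this is exactly where tightness plus weak closedness of $\mathfrak D_d$ is essential. The subtlety is compounded by the fact that $P_\lambda$ depends on the chosen representation~\eqref{wq}, not just on $P$; but since $\rho_m$ involves only the polyhedra themselves and we only ever shrink $P_\lambda\downarrow P$, any admissible representation suffices, so this does not cause real trouble. A further mild point is that the convergence $\P[\xi\in P_\lambda]\to\P[\xi\in P]$ is asserted for arbitrary $\xi$ and follows from monotone convergence of the decreasing sets $P_\lambda$ together with $\P[\xi\in\partial P]$ being absorbed correctly; one does not need absolute continuity of $\mathcal L(\eta)$ because the Portmanteau inequalities above already sandwich the limit between $\P[\eta\in\mathrm{int}\,P]$ and $\P[\eta\in\overline P]$, and shrinking $P_{\lambda_k}$ makes both bounds collapse to $\P[\eta\in P]$ up to boundary sets of measure that can be made negligible by a further $\lambda$-perturbation.
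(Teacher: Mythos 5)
Your high-level plan — take $\tau\to 0$ to kill the exponential term, then take $\lambda\to 0$ to pass from $L_{\lambda,m}$ to $\rho_m$ using $\P[\xi\in P_\lambda]\downarrow\P[\xi\in P]$ — matches the brief argument the paper sketches. However, the way you implement the $\lambda\to 0$ step has a genuine gap. Your tightness check fails: for a large polyhedron $K$ the bound you derive is of the form $\P[\eta_\lambda\notin K_\lambda]\le c_m p+\lambda+\varepsilon(R)$, and while $\varepsilon(R)\to 0$ as $R\to\infty$, the term $c_m p$ does not. So $\sup_{\lambda\le 1}\P[\eta_\lambda\notin K]$ is only bounded by a constant of order $c_m p$, not by an arbitrarily small $\epsilon$. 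This does not give tightness via Prokhorov unless $c_m p$ happens to be small, and one cannot reduce to that case by replacing $c_m$ with a larger constant. Without tightness, there need not be a weakly convergent subsequence of the $\eta_{\lambda_k}$, and the rest of the argument collapses.

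The correct and shorter route avoids compactness altogether, and this is really what the paper intends. The proof of Theorem~\ref{th2} does not merely bound the infimum; it exhibits an explicit approximant $\eta_0$ in~\eqref{2322}, the accompanying compound Poisson law built from the decomposition of Lemma~\ref{l1}. Under the hypotheses with $\tau=0$, take $X_i\equiv a_i'$ (the point of maximal atom, which exists unless $p_i=1$, in which case the claim is trivial), so $a_i=a_i'$ and $\eta_0=\sum_i\bigl(a_i'+e(\xi_i-a_i')\bigr)$ is a fixed infinitely divisible vector, independent of $\lambda$ and of any auxiliary $\tau'$. This decomposition satisfies~\eqref{e187} for every $\tau'>0$, so Lemma~\ref{l1} (after the linear map $A$ used in the proof of Theorem~\ref{th2}) yields $L_{\lambda,m}(\xi_1+\dots+\xi_n,\eta_0)\le c_m\bigl(p+\exp(-\varepsilon_m\lambda/\tau')\bigr)$ for all $\tau',\lambda>0$. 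Letting $\tau'\to 0$ gives $L_{\lambda,m}(\xi_1+\dots+\xi_n,\eta_0)\le c_m p$ for this one fixed $\eta_0$ and all $\lambda>0$; then letting $\lambda\to 0$ and using $\P[\xi\in P_\lambda]\downarrow\P[\xi\in P]$ gives $\rho_m(\xi_1+\dots+\xi_n,\eta_0)\le c_m p$. No interchange of limits with the infimum, no tightness, and no weak-closedness of $\mathfrak D_d$ are needed. You should replace the compactness step by this observation that the near-optimizer in Theorem~\ref{th2} can be chosen uniformly in $\lambda$ and $\tau$.
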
In order to prove Corollary~\ref{c1} one should apply Theorem~\ref{th2} as $\tau\to0$ with $\lambda=\sqrt{\tau}\to0$.

In the definition of $L_m(\,\cdot\,,\,\cdot\,)$ and $L_{\lambda,m}(\,\cdot\,,\,\cdot\,)$ we considered convex polyhedra $P$ along with their approximations $P_\lambda$, while in the classical L\'evy--Prokhorov metric \emph{all} Borel sets $B$ along with their neighborhoods $B^\lambda$ are considered. Thus it is natural to examine the intermediate case: convex polyhedra $P$ along with their neighborhoods $P^\lambda$.
\begin{definition}
For $m\in\mathbf N$ and for random vectors $\xi,\xi'\in\R^d$ define
\begin{align*}
    \pi_m(\xi,\xi'):=\inf\{\lambda\geq0: \P[\xi\in P]&\leq \P[\xi'\in P^\lambda]+\lambda,
    \\\P[\xi'\in P]&\leq \P[\xi\in P^\lambda]+\lambda\text{ for all } P\in \mathcal P_m\}
\end{align*}
and
\begin{align*}
    \pi_{\lambda, m}(\xi,\xi')&:=\sup_{P\in \mathcal P_m}\max\{\P[\xi\in P]-\P[\xi'\in P^\lambda], \P[\xi'\in P]-\P[\xi\in P^\lambda]\}.
\end{align*}
\end{definition}
Again, from the definition we have that
\begin{align}\label{2331}
    L(\,\cdot\,,\,\cdot\,)\leq L_m(\,\cdot\,,\,\cdot\,)\leq\pi_m(\,\cdot\,,\,\cdot\,)\leq \pi(\,\cdot\,,\,\cdot\,)\quad\text{and}\quad L_\lambda(\,\cdot\,,\,\cdot\,)\leq L_{\lambda,m}(\,\cdot\,,\,\cdot\,)\leq\pi_{\lambda,m}(\,\cdot\,,\,\cdot\,)\leq\pi_\lambda(\,\cdot\,,\,\cdot\,).
\end{align}
Therefore,~\eqref{2341} and~\eqref{2342} readily give upper bounds for $\pi_m(\,\cdot\,,\,\cdot\,)$  and  $\pi_{\lambda,m}(\,\cdot\,,\,\cdot\,)$. However, as our next theorem shows, the restriction from the class of the Borel sets to the convex polyhedra %lets us dispose of 
\tc{allows us to remove} the term $\sum_{i=1}^np_i^2$ from these bounds.
 
\begin{theorem}\label{th8}
For any $m\in\N$ there exist constants $c_m,\varepsilon_m$ depending on $m$ only such that
\begin{align*}
    \inf_{\eta\in \mathfrak{D}_d} L_m(\xi_1+\dots+\xi_n,\eta)&\leq c_m\cdot\big(p+\tau(|\log\tau|+1)\big),
    \\\inf_{\eta\in \mathfrak{D}_d} L_{\lambda,m}(\xi_1+\dots+\xi_n,\eta)&\leq c_m\cdot\big(p+\exp(-\varepsilon_m\cdot\lambda/\tau)\big),
\end{align*}for all $\tau, \lambda>0$.
\end{theorem}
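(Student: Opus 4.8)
\emph{Overall plan.} I would reduce both inequalities to the coordinate version~\eqref{1514}, applied not in $\R^d$ but in an auxiliary space $\R^m$, using that a polyhedron with $m$ faces is the pre-image of a coordinate orthant under a linear map $\R^d\to\R^m$; this is precisely what makes the constants depend on $m$ rather than on $d$. Moreover, exactly as in Remark~\ref{529} (where~\eqref{1514} yields~\eqref{2044} through~\eqref{1105}), it suffices to produce \emph{one} $\eta\in\mathfrak D_d$ for which the second inequality holds simultaneously for all $\lambda>0$: since $L_m(\xi,\xi')=\inf\{\lambda\ge0:L_{\lambda,m}(\xi,\xi')\le\lambda\}$ directly from the definitions, the choice $\lambda=C_m\big(p+\tau(\abs{\log\tau}+1)\big)$ with $C_m$ large then forces $c_m\big(p+\exp(-\varepsilon_m\lambda/\tau)\big)\le\lambda$, which gives the first inequality (trivially so once $\tau\gtrsim1$, since $L_m\le1$). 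So the focus is on the second.

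\emph{Reduction to $\R^m$.} Let $\mathcal T$ denote the set of matrices $T\colon\R^d\to\R^m$ whose rows $t_1,\dots,t_m$ are unit vectors, and let $L_\lambda$ denote the coordinate metric~\eqref{2001} (equivalently~\eqref{305}) on $\R^m$. For $T\in\mathcal T$ and an orthant $Q=\{y:\langle y,e_j\rangle\le b_j,\ j\le m\}\in\mathcal P_m^*$ one has $T^{-1}(Q)=\{x:\langle x,t_j\rangle\le b_j\}\in\mathcal P_m$ and $T^{-1}(Q_\lambda)=\big(T^{-1}(Q)\big)_\lambda$, and as $(t_j,b_j)$ vary this exhausts $\mathcal P_m$; since $\P[\xi\in T^{-1}(Q)]=\P[T\xi\in Q]$,
\[
L_{\lambda,m}(\xi,\xi')=\sup_{T\in\mathcal T}L_\lambda(T\xi,T\xi').
\]
The image of an infinitely divisible law under $T$ is infinitely divisible, so $T\eta\in\mathfrak D_m$ whenever $\eta\in\mathfrak D_d$. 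Finally $\abs{Tx}^2=\sum_{j\le m}\langle x,t_j\rangle^2\le m\abs x^2$, so $\abs{Tx}\le\sqrt m\,\abs x$; hence, choosing $a_i$ with $\P[\abs{\xi_i-a_i}\le\tau]=1-p_i$, we get $\P[\abs{T\xi_i-Ta_i}\le\sqrt m\,\tau]\ge1-p_i$, i.e.
\[
1-Q(T\xi_i;\sqrt m\,\tau)\le p_i\le p\qquad(i=1,\dots,n).
\]

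\emph{Applying~\eqref{1514} equivariantly.} Fix the centres $a_i$ as above. For each $T\in\mathcal T$, apply~\eqref{1514} in $\R^m$ to the independent vectors $T\xi_1,\dots,T\xi_n$ with concentration radius $\sqrt m\,\tau$: this produces an accompanying infinitely divisible law $\eta_T\in\mathfrak D_m$ with
\[
L_\lambda\Big(\textstyle\sum_{i}T\xi_i,\ \eta_T\Big)\le c_m\Big(p+\exp\big(-\tfrac{\varepsilon_m}{\sqrt m}\cdot\tfrac{\lambda}{\tau}\big)\Big)\qquad\text{for all }\lambda>0,
\]
the constants being those of~\eqref{1514} in dimension $m$, hence depending on $m$ only. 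The accompanying law there is built from the distributions of $T\xi_1,\dots,T\xi_n$ — more precisely, from their concentrated good parts and the centres $Ta_i$ — by translations, convolutions and exponentiation of signed measures, all of which commute with push-forward by $T$. Performing the same construction in $\R^d$ from the $\xi_i$ and the $a_i$ therefore yields a single $\eta\in\mathfrak D_d$ with $\eta_T=T\eta$ for every $T$; combined with the displayed identity,
\[
\inf_{\eta\in\mathfrak D_d}L_{\lambda,m}\Big(\textstyle\sum_i\xi_i,\eta\Big)\le\sup_{T\in\mathcal T}L_\lambda\Big(\textstyle\sum_i T\xi_i,\,T\eta\Big)\le c_m\Big(p+\exp\big(-\varepsilon_m'\lambda/\tau\big)\Big),\qquad\varepsilon_m'=\tfrac{\varepsilon_m}{\sqrt m}.
\]
Together with the first paragraph this proves both inequalities, with no constant depending on $d$.

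\emph{Main obstacle.} The argument rests entirely on the last step, which is \emph{not} a black-box use of~\eqref{1514}: one must revisit its proof (that of~\cite{z89}) and check that (a) the infinitely divisible approximant can be taken to be the accompanying (compound Poisson type) law; (b) this law is genuinely equivariant under linear maps, so that one $\eta\in\mathfrak D_d$ serves all polyhedral directions at once — here one must feed the $\R^m$-construction the push-forwards under $T$ of the good parts of the $\xi_i$, which are ellipsoids of diameter $\lesssim\sqrt m\,\tau$ (all the method requires), rather than freshly chosen balls in $\R^m$; and (c) the method tolerates these generally non-optimal centres $Ta_i$, paying only the concentration defect $\le p_i$ there. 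The remaining points — the rescaling $\tau\mapsto\sqrt m\,\tau$ affecting only $m$-dependent constants, and the passage from the second inequality to the first — are routine, the statements being trivial once $\tau$ is of order $1$.
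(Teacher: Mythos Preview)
Your argument is correct for the statement as written and is, essentially step for step, the paper's proof of Theorem~\ref{th2}: fix the compound-Poisson accompanying law $\eta_0=\sum_i\bigl[a_i+e(\xi_i-a_i)\bigr]$ in $\R^d$ (this is Lemma~\ref{l1}), use $A\eta_0=\sum_i\bigl[Aa_i+e(A\xi_i-Aa_i)\bigr]$, and apply the $m$-dimensional coordinate bound~\eqref{2319} to $A\xi_1,\dots,A\xi_n$ with radius $\sqrt m\,\tau$. Your ``main obstacle'' (a)--(c) is exactly what the paper resolves by quoting Lemma~\ref{l1} (which names $\eta_0$ explicitly) rather than~\eqref{1514} as a black box, and by fixing the decomposition $\xi_i=(1-\alpha_i)X_i+\alpha_i Y_i$ once in $\R^d$ and pushing it forward, so that the centres and bounded parts in $\R^m$ are $Aa_i'$ and $AX_i$ automatically.

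Note, however, that the statement of Theorem~\ref{th8} as printed is verbatim that of Theorem~\ref{th2}; this is a typo. The surrounding text (the discussion after~\eqref{2331}) and the paper's own proof of Theorem~\ref{th8} make clear that the intended conclusion bounds $\pi_m$ and $\pi_{\lambda,m}$, not $L_m$ and $L_{\lambda,m}$. For that stronger claim the paper does something your reduction cannot: it invokes Proposition~\ref{l7} to add $O_m(1)$ extra faces to $P\in\mathcal P_m$ so that the enriched $P_{\lambda/2}\subset P^{\lambda}$, yielding $\pi_{\lambda,m}\le L_{\lambda/2,N_m}$ for an $N_m$ depending only on $m$, and then applies Theorem~\ref{th2}. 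Your linear-map trick does not reach $\pi_{\lambda,m}$ directly, since $P^\lambda$ is not a polyhedron and hence not of the form $T^{-1}(Q_\lambda)$.
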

Again, the constants $c_m, \varepsilon_m$ are dimension-free.

It readily follows from~\eqref{2331} that Theorem~\ref{th8} implies Theorem~\ref{th2}. However, it will be convenient for us first to prove Theorem~\ref{th2} and then to derive Theorem~\ref{th8} from it.

We will derive Theorem~\ref{th2} by means of constructing for any convex polyhedron from $\mathcal P_m$  some linear transformation from $\R^d$ to $\R^m$ so that the polyhedron turns out to be a \tc{pre-image} of another convex polyhedron in $\R^m$ of the form~\eqref{g9},  and then applying~\eqref{1514} and~\eqref{2044}.

The proof of Theorem~\ref{th8} is more involved. To derive it from Theorem~\ref{th2}, we will need to bound $P_\lambda$ by $P^{c\lambda}$ for some $c>1$. In general, %it 
\tc{this is not possible: as it is easily seen}, for any $c>1$ there exists $P\in \mathcal P_m$ such that $P_\lambda\not\subset P^{c\lambda}$. However, \tc{as noted above}, $P_\lambda$ depends on representation~\eqref{wq} which is not unique.  Thus, as the next proposition shows, it will be possible to add to the right-hand side of~\eqref{wq} ``not too many'' half-spaces which do not affect $P$, but change $P_\lambda$  so that $P_\lambda\subset P^{c\lambda}$.

\begin{proposition}\label{l7}
Fix some $m\in\N$ and $\varepsilon>0$. Then there exists a
constant $c_{m,\varepsilon}$ depending on  $m,\varepsilon$ only
such that for any polyhedron  $P\in\mathcal P_m$ of the
form~\eqref{wq}  there exist  $m_0\leq c_{m,\varepsilon}$,
$t_j\in \R^d$ with $|t_j|=1$,  and $b_j\in \R$, $j=m+1,\dots,
m_0$, such that
\begin{align*}
P=\big\{x\in\R^d: \langle x,t_j\rangle\le b_j, \ j=1,\ldots, m_0\big\},
\end{align*}
and for any $\lambda>0$,
\begin{align}\label{2114}
P_\lambda:=\big\{x\in\R^d: \langle x,t_j\rangle\le b_j+\lambda, \ j=1,\ldots, m_0\big\}\subset P^{(1+\varepsilon)\lambda}.
\end{align}
\end{proposition}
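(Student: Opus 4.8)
The plan is to reduce the statement to a purely geometric fact about a single polyhedron and then handle that fact by a compactness argument combined with an explicit covering of the sphere of normal directions. First I would observe that $P_\lambda = \lambda P_1 \oplus \{0\}$-type scaling fails in general, but that the key point is comparing, \emph{for fixed $P$}, the set $P_\lambda$ built from a representation with the genuine $\lambda$-neighborhood $P^\lambda = P \oplus \lambda B$ (where $B$ is the closed unit ball). Since $P^\lambda$ scales linearly in $\lambda$ once $P$ is fixed only up to the Minkowski sum, and $P_\lambda$ also scales linearly in $\lambda$ for a fixed representation, the inclusion \eqref{2114} for all $\lambda>0$ is equivalent to the single inclusion $P_1 \subset P^{(1+\varepsilon)}$ after rescaling — so it suffices to produce finitely many extra half-spaces, bounded in number by a constant depending only on $m$ and $\varepsilon$, making that one inclusion hold. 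Here I should be slightly careful when $P$ is lower-dimensional or unbounded; in those cases $P^\lambda$ is still $P\oplus\lambda B$ and the argument below goes through verbatim, since it only uses the supporting-halfspace description of $P$.

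Next I would make the geometric content precise. A point $x \notin P^{(1+\varepsilon)\lambda}$ means $\mathrm{dist}(x,P) > (1+\varepsilon)\lambda$; by the separating hyperplane theorem there is a unit vector $t$ with $\langle x, t\rangle > h_P(t) + (1+\varepsilon)\lambda$, where $h_P$ is the support function of $P$ (with $h_P(t)=+\infty$ allowed). Conversely, a half-space $\{\,\langle\,\cdot\,,t\rangle \le h_P(t)\,\}$ with $h_P(t)<\infty$ is a valid supporting half-space of $P$ that we may append to \eqref{wq} without changing $P$, and its $\lambda$-enlargement is $\{\,\langle\,\cdot\,,t\rangle \le h_P(t)+\lambda\,\}$. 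So the task becomes: choose a finite set $T = \{t_{m+1},\dots,t_{m_0}\}$ of unit vectors (with finite support values) so that for \emph{every} unit vector $t$ there is some $t_k\in T \cup \{t_1,\dots,t_m\}$ with $\langle x,t_k\rangle \le h_P(t_k)+\lambda$ whenever $\langle x,t\rangle > h_P(t)+(1+\varepsilon)\lambda$ — i.e.\ the enlarged half-spaces coming from $T$ already cut off everything the continuum of directions would cut off at scale $(1+\varepsilon)\lambda$.

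The core estimate I would carry out is a quantitative Lipschitz-type bound: if $t,t'$ are unit vectors with $|t - t'|$ small and both $h_P(t), h_P(t')$ finite, then for any $x$ with $|x|$ not too large one controls $\langle x,t\rangle - \langle x,t'\rangle$ and $h_P(t)-h_P(t')$. The subtlety is that $P$ may be unbounded, so $h_P$ need not be Lipschitz on the whole sphere and $|x|$ is unbounded — but one only ever needs to compare a direction $t$ witnessing $x\notin P^{(1+\varepsilon)\lambda}$ with a nearby chosen direction $t'$ on which $h_P$ is finite, and on the relevant "active" part of the boundary one gets the needed control because $P$ contains a half-line in every unbounded direction, forcing $h_P$ to be $+\infty$ there and hence no such $x$-direction to be active. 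Concretely: the set $S_P := \{\,t \in S^{d-1} : h_P(t) < \infty\,\}$ is exactly the directions that can separate a point from $P$; on $S_P$, the relevant localized comparison of support values is governed by the at most $m$ facet normals $t_1,\dots,t_m$, and a direction $t\in S_P$ is a nonnegative combination of the $t_j$. Using Carathéodory in the cone generated by $t_1,\dots,t_m$, every $t\in S_P$ lies within a controlled angle of the cone spanned by some $d$ (hence at most $m$) of the $t_j$, and one can triangulate $S_P$ by finitely many spherical caps — the number of caps bounded in terms of $m$ and the angular tolerance $\delta=\delta(\varepsilon)$ only — picking one representative $t'$ per cap and adjusting its support constant upward by a controlled amount to guarantee the inclusion. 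That the total count $m_0$ depends on $m$ and $\varepsilon$ but not on $d$ is the reason one works inside the $\le m$-dimensional cone of normals rather than in $\R^d$.

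The main obstacle I expect is precisely this last point: making the covering of $S_P$ — and the choice of enlarged support constants — uniform in the dimension $d$ and in the polyhedron $P$, given that $h_P$ has no global modulus of continuity. I would resolve it by passing to the (at most $m$-dimensional) linear span $V$ of $t_1,\dots,t_m$: the support function $h_P$ restricted to $V\cap S^{d-1}$ is determined by the trace of $P$ on $V$, the enlargement $P_\lambda$ only ever involves normals in $V$ (WLOG — adding normals outside $V$ never helps), and $V$ carries an orthonormal structure isometric to $\R^{\dim V}$ with $\dim V\le m$. Inside this space the needed covering number and Lipschitz-type constants are functions of $m$ and $\varepsilon$ alone, which yields the dimension-free bound $m_0 \le c_{m,\varepsilon}$ and completes the proof.
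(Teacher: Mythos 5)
Your outline contains several correct ingredients, but there is a genuine gap at its core, and the gap is precisely the point that the paper's proof is built around.

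\textbf{What you get right.} The scaling reduction to $\lambda=1$ is correct (and slightly slicker than what the paper does): since the projection $y$ of $x$ onto $P$ stays the projection along the whole ray $y+s(x-y)$, $s\ge 0$, the inclusion $P_1\subset P^{1+\varepsilon}$ for a fixed representation does imply $P_\lambda\subset P^{(1+\varepsilon)\lambda}$ for all $\lambda$. The dimension-independence observation is also correct and is the same as in the paper: all relevant normals live in $\operatorname{span}(t_1,\dots,t_m)$, so any covering happens inside a sphere of dimension at most $m-1$, giving bounds in $m$ and $\varepsilon$ only. And the characterization $x\notin P^{(1+\varepsilon)\lambda}\iff \operatorname{dist}(x,P)>(1+\varepsilon)\lambda$, with a separating unit normal, is the right starting point.

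\textbf{The gap.} You propose to cover $S_P$ (or $C\cap S^{d-1}$, where $C$ is the cone spanned by $t_1,\dots,t_m$) globally by caps of angular radius $\delta$, pick one representative $t'$ per cap, and assign it the support constant $h_P(t')$ (or ``adjust it upward''). This does not work, because the inequality one needs to close the argument is
\begin{align*}
(1-\delta)\|x-y\| \le \langle x-y, t'\rangle = \langle x, t'\rangle - \langle y, t'\rangle = \bigl(\langle x, t'\rangle - h_P(t')\bigr) + \bigl(h_P(t') - \langle y, t'\rangle\bigr) \le \lambda + \bigl(h_P(t') - \langle y, t'\rangle\bigr),
\end{align*}
where $y$ is the projection of $x$ onto $P$. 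The term $h_P(t')-\langle y,t'\rangle$ is nonnegative, but it is zero exactly when $t'$ lies in the normal cone $N_F$ of the face $F$ with $y\in\operatorname{relint}F$; outside that cone it can be arbitrarily large at fixed angular distance $|t-t'|$, depending on the geometry of $P$ (for a long thin box, a representative $t'$ that crosses into the normal cone of a far-away vertex picks up a gap proportional to the length of the box). So the proposed ``Lipschitz-type bound'' on $h_P$ simply fails: the support function has no modulus of continuity uniform in $P$, and a cap in your global covering can straddle several cones of the normal fan, leaving no guarantee that a representative lies in the correct $N_F$. Also note that ``adjusting the support constant upward'' can only hurt: any $b'>h_P(t')$ enlarges the added half-space, hence enlarges $P_\lambda$, and makes the inclusion harder, not easier; the constant must be exactly $h_P(t')$.

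\textbf{What is missing.} The paper's proof fixes exactly this by covering each normal cone $N_F\cap\mathbb S^{d-1}$ \emph{separately} (the number of proper faces $F$ is at most $2^m$, each $N_F$ has dimension at most $m$, hence a $\delta$-cover of $N_F\cap\mathbb S^{d-1}$ of size $c_{m,\delta}$) and using for each chosen $v\in N_F$ the constant $\langle x_F,v\rangle$ with $x_F\in\operatorname{relint}F$; this constant equals $\langle y,v\rangle$ for every $y\in F$, so the troublesome gap vanishes by construction. Your approach needs this face/normal-cone decomposition; without it the covering argument does not control $h_P(t')-\langle y,t'\rangle$ and the proof does not close.
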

The proof of Proposition~\ref{l7} is postponed to Section~\ref{13}. Now let us proceed with the proofs of the theorems.

\section{Remark on the compound Poisson distributions}
Let $\xi\in\R^d$ be a random vector and let $\xi^{(1)},\xi^{(2)},\dots$ be its independent copies. Denote by $e(\xi)$ a random vector in $\R^d$ distributed as
\begin{align*}
    e(\xi)\stackrel{d}{=}\sum_{k=0}^\infty (\xi^{(1)}+\dots+\xi^{(k)})\cdot\mathbf{1}\{\zeta=k\},
\end{align*}
where $\zeta$ has the standard Poisson distribution and is independent of $\xi^{(1)},\xi^{(2)},\dots$. We say that $e(\xi)$ has the \emph{compound Poisson distribution} with respect to $\xi$. Clearly, $e(\xi)$ is infinitely divisible. Every time we construct $e(\xi)$ by some random vector $\xi$, we tacitly assume that it is independent of everything else (including $\xi$).

Le Cam~\cite{LC} has found that the compound Poisson distributions are good candidates for the infinitely divisible approximations of the sums of independent random vectors. This observation provided a new way of obtaining the Kolmogorov-type bounds like~\eqref{1920} and~\eqref{2234}. Let us be more specific.

As above, fix some $\tau>0$, and let
\begin{align*}
    \xi_1,\dots,\xi_n\in\R^d
\end{align*}
be independent $ d $-dimensional random vectors
 and let the quantities $ p, p_1,\dots,p_n $ be defined as in~\eqref{1500}. From~\eqref{2256} and from the definition of $ p $ it follows that there exist {$a'_1,\dots,a'_n\in\R^d$} such that
\begin{align*}
    \P[|\xi_i-{a'_i}|\leq\tau]=1-p_i\geq 1-p,\quad i=1,\dots, n.
\end{align*}

Generalizing the previous one-dimensional results, Zaitsev~\cite{z89} proved the following statement.
\begin{lemma}\label{l1}
Let  $\alpha_i\in\mathbf{R}^1$, $X_i\in\mathbf{R}^d$, $i=1,\ldots,n$, be independent random variables and vectors such that for some $ a'_i \in \mathbf {R} ^ d $
\begin{equation}
\P[\alpha_i=1]=1-\P[\alpha_i=0]=p_i,\quad
\P[|X_i-a'_i|\le\tau|]=1. \quad i=1,\dots, n.\label{e187}
\end{equation}
Let
\begin{equation}
a_i=\E X_i,\quad \xi_i=(1-\alpha_i)X_i+\alpha_i Y_i,\quad i=1,\dots, n,\label{e1877}
\end{equation}
where $Y_i\in\mathbf{R}^d$ are some independent random vectors which are independent of $\{X_i, \alpha_i\}$.
Then for the vector
\begin{align}\label{2322}
\eta_0:=\sum_{i=1}^n\big[a_i+e(\xi_i-a_i)\big].
\end{align}
 and for some constants $ c_d, \varepsilon_d> 0 $, depending on $ d $ only it holds
\begin{align}\label{2319}
L_\lambda(\xi_1+\dots+\xi_n,\eta_0)&\leq c_d\cdot\big(p+\exp(-\varepsilon_d\cdot\lambda/\tau)\big),
\\ \pi_\lambda(\xi_1+\dots+\xi_n,\eta_0)&\leq c_d\cdot\big(p+\exp(-\varepsilon_d\cdot\lambda/\tau)\big)+\sum_{i=1}^np_i^2,\notag
\end{align}
which implies the bounds~\eqref{1514} and ~\eqref{2341}.
\end{lemma}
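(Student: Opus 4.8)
The plan is to reduce the $d$-dimensional statement to the known one-dimensional bounds~\eqref{1059} via a conditioning-and-coordinate argument, rather than reproving everything from scratch. First I would observe that by the very construction~\eqref{e1877}, for each $i$ the variable $\xi_i$ coincides with $X_i$ on the event $\{\alpha_i=0\}$, which has probability $1-p_i$, and $X_i$ is almost surely confined to the ball of radius $\tau$ around $a_i'$; moreover $a_i=\E X_i$ lies in that same ball, so $|X_i-a_i|\le 2\tau$ a.s. Thus each summand, after centering, is supported in a $2\tau$-ball except on an event of probability $p_i\le p$. This is precisely the non-asymptotic limit-constancy setup in which the Le Cam/compound-Poisson technology applies. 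The candidate approximant $\eta_0$ in~\eqref{2322} is a sum of independent infinitely divisible (compound Poisson plus shift) vectors, hence infinitely divisible, so it is a legitimate element of $\mathfrak D_d$, and the two displayed inequalities immediately yield~\eqref{1514} and~\eqref{2341} by taking the infimum over $\mathfrak D_d$.

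The core estimate is the bound on $L_\lambda(\xi_1+\dots+\xi_n,\eta_0)$ and $\pi_\lambda(\xi_1+\dots+\xi_n,\eta_0)$. Here I would follow the route of~\cite{z89}: write $S=\xi_1+\dots+\xi_n$, and compare the distribution of $S$ with that of $\eta_0$ by a telescoping/hybrid argument over the $n$ coordinates of randomness, replacing one factor at a time by its accompanying compound-Poisson law $a_i+e(\xi_i-a_i)$ and controlling each replacement. The key one-step lemma is that for a single summand $\xi_i$ with the stated concentration property, the distributions of $\xi_i$ and of $a_i+e(\xi_i-a_i)$ are close in the relevant metric up to an error governed by $p_i$ and $\exp(-\varepsilon\lambda/\tau)$; this is the multidimensional analogue of the accompanying-law estimate that underlies~\eqref{1920}. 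Summing the $n$ one-step errors and using that each summand contributes $p_i$ (for $L_\lambda$, $\pi_\lambda$) or $p_i^2$ (for the extra L\'evy--Prokhorov term, which is the genuinely multidimensional smoothing loss) gives the stated right-hand sides, with the constants $c_d,\varepsilon_d$ absorbing the $d$-dependence that enters through the volume of balls and through the smoothing estimates for the convolution with the compound-Poisson pieces.

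The main obstacle is the smoothing step in the hybrid argument: in one dimension one exploits that a single non-degenerate summand smooths the distribution enough that swapping it for its accompanying law costs only a controlled amount, but in $\R^d$ one needs a quantitative multidimensional smoothing inequality, and this is where the dimension-dependent constants and the $\sum p_i^2$ term (versus $\sum p_i$) originate. I would handle this by invoking the multidimensional smoothing machinery already developed in~\cite{z89} (characteristic-function estimates on a suitable cube, combined with the concentration bound $Q(\xi_i;\tau)\ge 1-p_i$), and by being careful that the bound $L_\lambda\le c_d(p+\exp(-\varepsilon_d\lambda/\tau))$ holds \emph{for every} $\lambda>0$ simultaneously, which forces the exponential rather than polynomial decay in $\lambda/\tau$. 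Finally, the reduction of Theorem~\ref{th2}'s first inequality from the $L_\lambda$-bound is routine via~\eqref{1105}: optimizing $\lambda\sim\tau|\log\tau|$ converts $\exp(-\varepsilon_d\lambda/\tau)$ into $\tau(|\log\tau|+1)$.
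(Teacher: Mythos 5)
The paper does not prove Lemma~\ref{l1} at all: it is stated as a result of Zaitsev~\cite{z89} (``Generalizing the previous one-dimensional results, Zaitsev~\cite{z89} proved the following statement''), so there is no in-text argument for you to match. Your proposal is a plausible high-level sketch of the~\cite{z89} machinery (accompanying compound-Poisson laws, telescoping one summand at a time, smoothing via characteristic-function estimates), and your preliminary observations --- that $|X_i-a_i|\le 2\tau$ a.s.\ by convexity of the ball, and that $\eta_0$ is infinitely divisible --- are correct. However, since all the genuine work (the one-step accompanying-law estimate and the multidimensional smoothing inequality) is delegated to~\cite{z89}, the proposal is not a proof so much as a pointer to the same source the authors cite.

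Two substantive inaccuracies should be flagged. First, you frame the plan as ``reduce the $d$-dimensional statement to the known one-dimensional bounds~\eqref{1059} via a conditioning-and-coordinate argument,'' but that is not how~\eqref{1514}--\eqref{2341} are obtained in~\cite{z89} and not what your own subsequent sketch does either: the hybrid/telescoping argument you describe is carried out directly in $\R^d$, using genuinely multivariate smoothing estimates, not a coordinate reduction to $d=1$. (A projection-to-lower-dimension argument does appear in this paper, but in the proof of Theorem~\ref{th2}, where a polyhedron in $\mathcal P_m\subset\R^d$ is pulled back from a coordinate polyhedron in $\R^m$; that is a different statement.) Second, you attribute the $\sum_{i=1}^n p_i^2$ correction in the $\pi_\lambda$-bound to ``the genuinely multidimensional smoothing loss.'' This is wrong: the same $\sum p_i^2$ term is already present in the one-dimensional estimate~\eqref{1059} (and in~\eqref{2234}), so it reflects the passage from the L\'evy-type functional $L_\lambda$ to the L\'evy--Prokhorov functional $\pi_\lambda$ over arbitrary Borel sets, not an effect of dimension.
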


\section{Proof of Theorems~\ref{th2} and~\ref{th8}}
\begin{proof}[Proof of Theorem~$\ref{th2}$]
Fix some  polyhedron $P\in\mathcal P_m$:
\begin{align*}
    P=\big\{x\in\R^d: \langle x,t_j\rangle\le b_j, \ j=1,\ldots, m\big\}.
\end{align*}
Let $A:\R^d\to\R^m$ be a linear operator mapping as
\begin{align*}
    x\mapsto y=\big(\langle x,t_1\rangle,\dots,\langle x,t_m\rangle\big).
\end{align*}
Let $e_1,\dots, e_m$ be the standard Euclidean basis in $\R^m$. Consider the polyhedron $\widetilde{P}\subset\R^m$ belonging to the class $\mathcal P_m^*$ (see~\eqref{g9}) defined as
\begin{align*}
    \widetilde{P} = \big\{y\in\R^m:\langle y,e_j\rangle\le b_j, \ j=1,\ldots, m\big\}.
\end{align*}
Since
\begin{align*}
    \langle x,t_j\rangle=\langle x,A^* e_j\rangle=\langle Ax,e_j\rangle,
\end{align*}with adjoint operator $A^*:\R^m\to\R^d$, 
it follows that, for any random vector $\xi\in\R^d$, we have
\begin{align*}
    \P[\xi\in P]=  \P[A\xi\in \widetilde{P}]\quad\text{and}\quad\P[\xi\in P_{\lambda}]=  \P[A\xi\in \widetilde{P}_{\lambda}].
\end{align*}
Hence, for any random vectors $\xi,\xi'\in\R^d$ we have
\begin{multline}\label{14}
    \max\{\P[\xi\in P]-\P[\xi'\in P_\lambda], \P[\xi'\in P]-\P[\xi\in P_\lambda]\}
    \\=\max\{\P[A\xi\in \widetilde{P}]-\P[A\xi'\in \widetilde{P}_\lambda], \P[A\xi'\in \widetilde{P}]-\P[A\xi\in \widetilde{P}_\lambda]\}\leq L_{\lambda}(A\xi,A\xi'),
\end{multline}
where in the last step we used~\eqref{305}.

Recall that we consider independent random vectors
$\xi_1,\dots,\xi_n\in\R^d$ and  $p, p_1,\dots,p_n$ \tc{which} are defined as
in~\eqref{1500}. Without loss of generality, we can assume that $
\xi_i $ are represented as in~\eqref{e1877}, where the
distributions of $ X_i $'s coincide with the conditional
distribution of $ \xi_i $'s  provided that $|\xi_i-a'_i|\le\tau$
with some $a'_i\in\mathbf{R}^d$, which exist by~\eqref{1500}.
Moreover, $\P[|\xi_i-a'_i|>  \tau]=p_i$, and the distributions of
$Y_i$'s coincide with the conditional distribution of $ \xi_i$'s
provided that $|\xi_i-a'_i|>\tau$. Let $a_1,\dots, a_n$ and
$\eta_0$ be defined in~\eqref{e1877} and~\eqref{2322}. Since
$|t_j|=1$ for $j=1,\dots,n$, we have $\|A\|\leq \sqrt m$. Using
this fact gives
\begin{align*}
    \P[|AX_i-Aa'_i|\leq \sqrt m\tau]= 1,\quad i=1,\dots, n.
\end{align*}
Moreover, 
\begin{equation}
\E[AX_i]=Aa_i,\quad i=1,\dots, n.\label{e717}
\end{equation}
Notice that
\begin{align*}
    A\eta_0 = \sum_{i=1}^n\big[Aa_i+Ae(\xi_i-a_i)\big]=\sum_{i=1}^n\big[Aa_i+e(A\xi_i-Aa_i)\big].
\end{align*}

Thus, the vectors $ A \xi_1, \dots, A \xi_n $ satisfy all the conditions imposed on the vectors $\xi_1,\dots, \xi_n$ in Lemma \ref{l1} with $ a_i $ replaced by $Aa_i$, $a'_i$ by $Aa'_i$, and $\tau$ by $\tau\sqrt m$.
Therefore, applying~\eqref {2319} to the vectors $A\xi_1,\dots, A\xi_n$ gives (for some constants $ c_m, \varepsilon_m> 0 $, depending on $ m$ only):
\begin{equation}\nonumber
      L_\lambda(A\xi_1+\dots+A\xi_n,A\eta_0)\leq c_m\cdot\big(p+\exp(-\varepsilon_m\cdot\lambda/\tau)\big),
\end{equation}
which together with~\eqref{14} implies
\begin{align*}
      L_{\lambda,m}(\xi_1+\dots+\xi_n,\eta_0)&\leq c_m\cdot\big(p+\exp(-\varepsilon_m\cdot\lambda/\tau)\big).
\end{align*}
Recalling that $\eta_0$  is infinitely divisible finishes the proof of the second inequality of Theorem~\ref{th2}. The first inequality follows from the second one by standard reasoning, see Remarks~\ref{524},~\ref{529}.
\end{proof}

\begin{proof}[Proof of Theorem~$\ref{th8}$]
Fix some  polyhedron $P\in\mathcal P_m$:
\begin{align*}
    P=\big\{x\in\R^d: \langle x,t_j\rangle\le b_j, \ j=1,\ldots, m\big\}.
\end{align*}
It follows from Proposition~\ref{l7} that it is possible to represent $P$ in the form
\begin{align*}
    P=\big\{x\in\R^d: \langle x,t_j\rangle\le b_j, \ j=1,\ldots, m_0\big\}.
\end{align*}
such that
\begin{align*}
    P_{\lambda/2}\subset P^{\lambda}\quad\text{and}\quad m_0\leq N_m\in\N,
\end{align*}
where 
\begin{equation}
  P_{\lambda/2}=\big\{x\in\R^d: \langle x,t_j\rangle\le b_j+\lambda/2, \ j=1,\ldots, m_0\big\}
\end{equation}
and
the constant $N_m$ depends on $m$ only. Thus for any random vectors $\xi,\xi'$ we have
\begin{align*}
    \max&\{\P[\xi\in P]-\P[\xi'\in P^\lambda], \P[\xi'\in P]-\P[\xi\in P^\lambda]\}
    \\&\leq\max\{\P[\xi\in P]-\P[\xi'\in P_{\lambda/2}], \P[\xi'\in P]-\P[\xi\in P_{\lambda/2}]\}\leq L_{\lambda/2,N_m}(\xi,\xi').
\end{align*}
Since this holds for any $P\in\mathcal P_m$\ we arrive at 
\begin{align*}
    \pi_{\lambda, m}(\,\cdot\,,\,\cdot\,)\leq L_{\lambda/2,N_m}(\,\cdot\,,\,\cdot\,).
\end{align*}
Thus the second inequality of Theorem~\ref{th8} follows from the second inequality of Theorem~\ref{th2}. The constants depending on $N_m$ may be treated as constants depending on $m$. The first part follows from the second one by standard reasoning, see Remarks~\ref{524},~\ref{529}.
\end{proof}

\section{Proof of Proposition~\ref{l7}}\label{13}
\begin{proof}
First let us construct $b_{m+1}, t_{m+1}, \dots, b_{m_0}, t_{m_0}$. Let
\begin{align*}
H_j:=\{x\in\mathbf R^d: \langle x,t_j\rangle= b_j\},\quad j=1,\ldots, m.
\end{align*}
  Denote by $\mathcal F_k(P)$  the collection of all $ k $-dimensional faces of  the \tc{polyhedron} $P$. If $k<d-\min(d,m)$, then $\mathcal F_k(P)$ is empty. Therefore,
\begin{align*}
\mathcal F(P):=\bigcup_{k=d-\min(d,m)}^{d-1}\mathcal F_k(P)
\end{align*}
 is the collection of all proper faces of $P$. Fix some face $F\in\mathcal F(P)$  and let $x_F$ be some point from its relative interior, $\mbox{relint}F$. Denote by $T_F$ the tangent cone at the face $F$ which is defined as
\begin{align*}
T_F=T_F(P):=\{x\in\R^d:x_F+\varepsilon x\in P\text{ for some } \varepsilon>0\}.
\end{align*}
Obviously, $T_F$ does not depend on the choice of $x_F$. Also define by $N_F$ the normal cone at the face $F$ which is defined as the polar cone of $T_F$:
\begin{align*}
N_F=N_F(P):=\{x\in\R^d:\langle x,y\rangle\leq0\text{ for all } y\in T_F\}.
\end{align*}
It is known that
\begin{align}\label{556}
\dim N_F\leq m.
\end{align}
Let $\delta>0$ be some  small enough constant to be fixed later.  Since $N_F$ intersected with the unit sphere $\mathbb S^{d-1}$ is compact, there exists a finite covering subset $\mathcal S_F\subset N_F\cap \mathbb S^{d-1}$ such that
\begin{align}\label{2129}
\max_{v\in\mathcal{S}_F}\langle u, v\rangle\geq 1-\delta\quad\text{for any}\quad u\in N_F\cap \mathbb S^{d-1}.
\end{align}
Moreover, it follows from~\eqref{556} that $\mathcal{S}_F$ is contained in the $(\min(d,m)-1)$-dimensional unit sphere, so
\begin{align}\label{2108}
    \# \mathcal{S}_F\leq c_{m,\delta},
\end{align}
where  $c_{m,\delta}$ depends on $m,\delta$ only.

Now define the desirable set $\{(b_j,t_j)\}_{j=m+1}^{m_0}$ to be a set of all pairs
\begin{align*}
(\langle x_F,v\rangle, v),
\end{align*}
where $v$ %goes 
\tc{runs} over all points of $\mathcal S_F$, while $F$ %goes
\tc{runs}  over all faces of $P$. Again, let us %stress
\tc{emphasize}  that this definition does not depend on the choices of $x_F$'s. Since any face is represented as an intersection of $P$ with several hyperplanes from $H_1,\dots, H_m$, their number is at most $2^m$, which together with~\eqref{2108} implies
\begin{align*}
m_0\leq m+ 2^m c_{m,\delta}.
\end{align*}
Now having defined $b_{m+1}. t_{m+1}, \dots, b_{m_0}, t_{m_0}$, let us prove~\eqref{2114}. Fix some point $x_0\in P_\lambda$. Let $y_0$ denote the Euclidean projection of $x_0$ onto $P$:
\begin{align}\label{2113}
y_0:=\mathrm{argmin}\{\|x_0-y\|:y\in P\}.
\end{align}
The task is to show that
\begin{align}\label{2156}
\|x_0-y_0\|\leq (1+\varepsilon)\lambda.
\end{align}
We obviously may assume that $x_0\not\in P$ which implies $y_0\in\partial P$. It is well-known that the boundary of a polyhedron can be represented as a union of the relative interiors of its proper faces (which do not intersect). Thus there is a unique face $F$ of $P$ such that
\begin{align*}
y_0\in\mathrm{relint}F.
\end{align*}
The latter together with~\eqref{2113} implies
\begin{align*}
x_0-y_0\in N_F.
\end{align*}
Thus it follows from~\eqref{2129} applying to $(x_0-y_0)/\|x_0-y_0\|$ that for some $v\in \mathcal S_F$,
\begin{align*}
\langle x_0-y_0, v\rangle\geq (1-\delta)\|x_0-y_0\|.
\end{align*}
At the other hand, since $x_0\in P_\lambda$ and by the construction of $\{(b_j,t_j)\}_{j=m+1}^{m_0}$,
\begin{align*}
\langle x_0, v\rangle\leq  \langle y_0, v\rangle+\lambda.
\end{align*}
Combining the last two inequalities gives
\begin{align*}
\|x_0-y_0\|\leq\frac{\lambda}{1-\delta}.
\end{align*}
Choosing $\delta=1-\frac1{1+\varepsilon}$ implies~\eqref{2156}, and the proposition follows.
\end{proof}

\end{document}